\theoremstyle{plain}
\newtheorem{thm}{Theorem}[section]
\newtheorem{lem}[thm]{Lemma}
\lbrace\begin{array}{@{}l@{}}}%
\newcommand{\RNum}[1]{\uppercase\expandafter{\romannumeral #1\relax}}
\title{On a graph isomorphic to $NO^{+}(6,2)$}
\author{Federico Romaniello \footnote{Federico Romaniello:
federico.romaniello@unito.it
Dipartimento di Matematica "Giuseppe Peano"  -
Universit\`{a} di Torino - Via Carlo Alberto 10
- 10123 Torino (Italy),}\hfill\newline\hspace*{1.4em}
Valentino Smaldore \footnote{Valentino Smaldore:
valentino.smaldore@unipd.it 
Dipartimento di Tecnica e Gestione dei Sistemi Industriali - 
Universit\`{a} degli Studi di Padova - Stradella S. Nicola 3 - 36100 Vicenza (Italy).}}
\date{}
\begin{document}
\maketitle
\begin{abstract}
 Let $Q^{+}(2n-1,2)$ be a non-degenerate hyperbolic quadric of $PG(2n-1,2)$. Let $NO^{+}(2n,2)$ be the tangent graph, whose vertices are the points of $PG(2n-1,2) \setminus Q^{+}(2n-1,2)$ and two vertices $u,~v$ are adjacent if the line joining $u$ and $v$ is tangent to $Q^{+}(2n-1,2)$. Then $NO^{+}(2n-1,q)$ is a strongly regular graph. Let $\mathcal{V}^{4}_{2}$ be the \textit{Veronese surface} in $PG(5,q)$, and $\mathcal{M}^{3}_{4}$ its \textit{secant variety}. When $q=2$, $|Q^{+}(5,2)|=|\mathcal{M}^{3}_{4}|=35$. In this paper we define the graph $N\mathcal{M}^{3}_{4}$, with 28 vertices in $PG(5,2)\setminus\mathcal{M}^{3}_{4}$ and with the analogue incidence rule of the tangent graph. Such graph is isomorphic to $NO^{+}(6,2)$.

\end{abstract}
\section{Introduction}
A \textit{strongly regular graph} with parameters $(v,k,\lambda,\mu)$ is a graph with $v$ vertices where each vertex is incident with $k$ edges, any two adjacent vertices have $\lambda$ common neighbours, and any two non-adjacent vertices have $\mu$ common neighbours.
 Strongly regular graphs were introduced by R. C. Bose in \cite{Bose} in 1963, and ever since they have intensively been investigated. In particular, the eigenvalues of the adjacency matrix of a strongly regular are known; see \cite{brvan}:
     a strongly regular graph $G$ with parameters $(v,k,\lambda,\mu)$ has exactly three eigenvalues: $k$, $\theta_{1}$ and $\theta_{2}$ of multiplicity  $1$, $m_{1}$ and $m_{2}$, respectively, where:
    $$\theta_{1}=\frac{1}{2}\big[(\lambda-\mu)+\sqrt{(\lambda-\mu)^{2}+4(k-\mu)}\big],$$
    $$\theta_{2}=\frac{1}{2}\big[(\lambda-\mu)-\sqrt{(\lambda-\mu)^{2}+4(k-\mu)}\big],$$
    $$m_{1}=\frac{1}{2}\Big[(v-1)-\frac{2k-(v-1)(\lambda-\mu)}{\sqrt{(\lambda-\mu)^{2}+4(k-\mu)}}\Big],$$
    $$m_{2}=\frac{1}{2}\Big[(v-1)+\frac{2k-(v-1)(\lambda-\mu)}{\sqrt{(\lambda-\mu)^{2}+4(k-\mu)}}\Big].$$
   The \emph{spectrum} of a strongly regular graph is the triple $(k,\theta_{1}^{m_{1}},\theta_{2}^{m_{2}})$. Therefore, two strongly regular graphs with the same parameters are \textit{cospectral}, that is, they have the same spectrum. Two isomorphic graphs are always cospectral, but the converse is not always true. Indeed, even in the family of strongly regular graphs there are examples with same parameters, but not isomorphic.

   Several strongly regular graphs derive from finite polar spaces, in particular from collinearity graphs or incidence graphs. 
   The \textit{tangent graph} of a polar space $\mathcal{P}$ embedded in a projective space is defined to be the graph in which the vertices are the non-isotropic points with respect to the polarity defining $\mathcal{P}$, and two vertices are adjacent if and only if they lie on the same tangent line to $\mathcal{P}$. In \cite{brvan} it is proved that in some cases tangent graphs are strongly regular. In this paper we focus on the graph $NO^{+}(2n,2)$, belonging to a non-degenerate hyperbolic quadric $Q^{+}(2n-1,2)$.

  \section{The graph $NO^{+}(6,2)$}
   Consider a non-degenerate hyperbolic quadratic form in $PG(2n-1,2)$, the corresponding quadric $Q^{+}(2n-1,2)$ has $2^{2m-1}+2^{m-1}-1$ isotropic points. The graph $NO^{+}(2n,2)$ is the strongly regular graph with vertex set $PG(2n-1,2)\setminus Q^{+}(2n-1,2)$ where two vertices are adjacent if the points are orthogonal. $NO^{+}(2n,2)$ has the following parameters:\\
   $v=2^{2n-1}-2^{n-1},$\\
   $k=2^{2n-2}-1,$\\
   $\lambda=2^{2n-3}-2,$\\
   $\mu=2^{2n-3}+2^{n-2}$.\\
   The following lemma is a straightforward consequence of the construction of $NO^{+}(2n,2)$ since the group $P\Omega^{+}(2n,2)$ acts transitively on the external points of $Q^{+}(2n-1,2)$ and it preserves the adjacency properties of the graph.
  \begin{lem}
   \label{subgr}
   The simple group $P\Omega^{+}(2n,2)$ is an automorphism group of $NO^{+}(2n,2)$.
  \end{lem}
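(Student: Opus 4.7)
The plan is to construct a natural homomorphism $\varphi\colon P\Omega^{+}(2n,2)\to \mathrm{Aut}(NO^{+}(2n,2))$ and show it is injective. Every element of $P\Omega^{+}(2n,2)$ is induced by a linear map $g\in\Omega^{+}(2n,2)$ on $\mathbb{F}_{2}^{2n}$ preserving the defining quadratic form $Q$ of $Q^{+}(2n-1,2)$; in particular $g$ induces a collineation of $PG(2n-1,2)$ sending isotropic points to isotropic points.

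The first step is to verify that $\varphi$ takes values in graph automorphisms. Since $g$ stabilizes $Q^{+}(2n-1,2)$ setwise, it permutes its complement, i.e.\ the vertex set of $NO^{+}(2n,2)$. Because $g$ preserves $Q$ it also preserves the associated polar bilinear form $B(\mathbf{x},\mathbf{y}) = Q(\mathbf{x}+\mathbf{y}) + Q(\mathbf{x}) + Q(\mathbf{y})$, and two external points $u,v$ are adjacent in $NO^{+}(2n,2)$ exactly when $B(u,v) = 0$ (equivalently, when the line $uv$ is tangent to the quadric). Hence adjacency is preserved, so $\varphi(g)$ is indeed a graph automorphism.

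For injectivity, the simplicity of $P\Omega^{+}(2n,2)$ (valid for $n\geq 3$, which is the range of interest here) implies that $\ker\varphi$ is either trivial or the whole group; the transitive action on external points recalled just before the lemma rules out the latter. The only delicate point, and thus the main ``obstacle,'' is merely bookkeeping with conventions: one must confirm that $P\Omega^{+}(2n,2)$ as defined here genuinely acts on $PG(2n-1,2)$ preserving $Q$, which in characteristic two reduces to identifying $P\Omega^{+}(2n,2)$ with $\Omega^{+}(2n,2)$ itself, since $\mathbb{F}_{2}^{\times}$ is trivial. Once this identification is fixed, the geometric content of the lemma is immediate from the hint already provided.
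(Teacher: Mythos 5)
Your argument is correct and follows essentially the same route as the paper, which justifies the lemma in a single sentence by noting that $P\Omega^{+}(2n,2)$ stabilises the set of external points and preserves adjacency (which, as you verify, amounts over $\mathbb{F}_{2}$ to preserving the polar form, since $u\sim v$ iff $B(u,v)=0$). Your additional faithfulness argument via simplicity and the transitive action is a detail the paper leaves implicit, but it is a welcome and correct completion rather than a different approach.
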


  In the case $n=3$ the parameters of $NO^{+}(6,2)$ are $(28,15,6,10)$ while the spectrum is $(15,-5^{7},1^{20})$. From Lemma \ref{subgr} we know that $P\Omega^{+}(6,2)\cong A_{8}$ acts on $NO^{+}(6,2)$ as an automorphism group. The full-automorphism group of the graph is $PGO^{+}(6,2)\cong S_{8}$ (see \cite{Svob}, Theorem 2).

  \section{The graph $N\mathcal{M}^{3}_{4}$}
  The \textit{Veronese surface} of all conics of $PG(2,q)$, is the variety $\mathcal{V}^{4}_{2}=\{(a^{2},b^{2},c^{2},ab,ac,bc)|(a,b,c)\in PG(2,q)\}\subseteq PG(5,q)$. The mapping
  $$\mu:\,
   \begin{cases}
    PG(2,q)\rightarrow PG(5,q)\\
   (x_{1},x_{2},x_{3})\mapsto(x_{1}^{2},x_{2}^{2},x_{3}^{2},x_{1}x_{2},x_{1}x_{3},x_{2}x_{3}).
   \end{cases}$$
   is called the Veronese embedding of $PG(2,q)$. The notation $\mathcal{V}^{2^n}_{n}=\mathcal{V}^{4}_{2}$ follows the one described in \cite[Notation 4.6]{2}. The variety $\mathcal{V}^{4}_{2}$ consists of $q^{2}+q+1$ points and its stabilising subgroup of $\mathcal{V}^{4}_{2}$ in $PGL(6,q)$ arises by \textit{lifting} from the group of collineations of $PG(2,q)$.
   \begin{thm}[{\cite[Proposition 4]{0}}]
    If $y\mapsto yA$ is a linear collineation of $PG(2,q)$ with $A=(a_{ij})$, $i,j=1,2,3$, the Veronese surface $\mathcal{V}^{4}_{2}$ is stabilised by the \emph{lifted} linear collineation $x\mapsto xB$ of $PG(5,q)$ given by the matrix
    $$B=\left(
      \begin{array}{cccccc}
        a_{11}^{2} & a_{12}^{2} & a_{13}^{2} & a_{11}a_{12} & a_{11}a_{13} & a_{12}a_{13} \\
        a_{21}^{2} & a_{22}^{2} & a_{23}^{2} & a_{21}a_{22} & a_{21}a_{23} & a_{22}a_{23} \\
        a_{31}^{2} & a_{32}^{2} & a_{33}^{2} & a_{31}a_{32} & a_{31}a_{33} & a_{32}a_{33} \\
        2a_{11}a_{21} & 2a_{12}a_{22} & 2a_{13}a_{23} & a_{11}a_{22}+a_{21}a_{12} & a_{11}a_{23}+a_{21}a_{13} & a_{12}a_{23}+a_{22}a_{13} \\
        2a_{11}a_{31} & 2a_{12}a_{32} & 2a_{13}a_{33} & a_{11}a_{32}+a_{31}a_{12} & a_{11}a_{33}+a_{31}a_{13} & a_{12}a_{33}+a_{32}a_{13} \\
        2a_{21}a_{31} & 2a_{22}a_{32} & 2a_{23}a_{33} & a_{21}a_{32}+a_{31}a_{22} & a_{21}a_{33}+a_{31}a_{23} & a_{22}a_{33}+a_{32}a_{23} \\
      \end{array}
    \right)$$
   \end{thm}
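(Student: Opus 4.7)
The key point is that the matrix $B$ is constructed precisely so that the Veronese embedding intertwines the action of $A$ on $PG(2,q)$ with that of $B$ on $PG(5,q)$. Accordingly, the plan is to establish the functional identity
\[
 \mu(yA) \;=\; \mu(y)\,B \qquad \text{for every } y=(y_1,y_2,y_3)\in PG(2,q).
\]
Once this holds, every point $P=\mu(y)$ of $\mathcal{V}^{4}_{2}$ satisfies $PB=\mu(yA)\in\mathcal{V}^{4}_{2}$, so the lifted collineation maps $\mathcal{V}^{4}_{2}$ into itself; bijectivity on $\mathcal{V}^{4}_{2}$ then follows immediately from the invertibility of $A$.

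To verify the identity I would set $y':=yA$, so $y'_j=\sum_i y_i a_{ij}$ for $j=1,2,3$, and compute each of the six coordinates of $\mu(y')$: the three squares $(y'_j)^2$ and the three cross products $y'_j y'_k$ with $j<k$. Expanding and collecting according to the monomial basis $\{y_1^2,y_2^2,y_3^2,y_1y_2,y_1y_3,y_2y_3\}$ of $\mu(y)$, the coefficient of each monomial in the $j$-th coordinate of $\mu(y')$ should match the corresponding entry of the $j$-th column of $B$. The upper-left $3\times 3$ block of $B$ records the squared coefficients arising from $(y'_j)^2$; the lower-left $3\times 3$ block records the cross-term coefficients produced by those same squares (hence the factor $2$, coming from $(y_i a_{ij}+y_k a_{kj})^2$); the upper-right $3\times 3$ block encodes the $y_i^2$ contributions to the products $y'_j y'_k$; and the lower-right $3\times 3$ block encodes the mixed $y_iy_k$ contributions to those products.

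The computation is purely mechanical, so the only genuine pitfall is bookkeeping. In particular, since the statement uses the row-vector convention $y\mapsto yA$, one must consistently read $a_{ij}$ as the coefficient of $y_i$ in $y'_j$; swapping this convention would produce the transpose of $B$ instead, destroying the intertwining. Beyond this indexing check no further geometric input is needed: the identity $\mu(yA)=\mu(y)B$ is simply the rewriting of the six degree-two forms $(y'_j)^2$ and $y'_jy'_k$ in the monomial basis, and stabilization of $\mathcal{V}^{4}_{2}$ drops out as a corollary.
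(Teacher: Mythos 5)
Your proposal is correct: the intertwining identity $\mu(yA)=\mu(y)B$ does hold with the stated $B$ under the row-vector convention $y'_j=\sum_i y_i a_{ij}$ (each column of $B$ lists the coefficients of $(y'_j)^2$, resp. $y'_jy'_k$, in the monomial basis, and the identity remains valid in characteristic $2$ where the factors $2$ vanish), and stabilisation of $\mathcal{V}^{4}_{2}$ follows since $\mu$ and $y\mapsto yA$ are bijections onto their images. Note that the paper itself gives no proof of this statement --- it is imported verbatim from the cited reference --- but your direct verification is exactly the standard argument one would expect there.
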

   The group of lifted collineations has the following orbits on the $q^{5}+q^{4}+q^{3}+q^{2}+q+1$ conics of $PG(2,q)$:
   \begin{itemize}
    \item $\mathcal{O}_{1}:=q^{2}+q+1$ double lines (points of $\mathcal{V}^{4}_{2}$);
    \item $\mathcal{O}_{2}:=\frac{1}{2}(q^{2}+q+1)(q^{2}-q)$ pairs of imaginary lines;
    \item $\mathcal{O}_{3}:=\frac{1}{2}(q^{2}+q+1)(q^{2}+q)$ pairs of intersecting lines;
    \item $\mathcal{O}_{4}:=q^{5}-q^{2}$ non-degenerate conics.
   \end{itemize}
   The set of all degenerate conics $\mathcal{O}_{1}\cup\mathcal{O}_{2}\cup\mathcal{O}_{3}=\mathcal{M}^{3}_{4}$ is called \textit{secant variety} and  $|\mathcal{M}^{3}_{4}|=|Q^{+}(5,q)|=(q^{2}+1)(q^{2}+q+1)$, see \cite[Theorem 4.18]{2}. The secant variety $\mathcal{M}^{3}_{4}$ is a hypersurface of degree 3 and dimension 4. We may identify points of $PG(5,q)$ with $3\times3$ symmetric matrices over $GF(q)$, by:
   $$(X_{1},X_{2},X_{3},X_{4},X_{5},X_{6})\longleftrightarrow\left(\begin{array}{ccc}
                                                                                        X_{1} & X_{4} & X_{5} \\
                                                                                        X_{4} & X_{2} & X_{6} \\
                                                                                        X_{5} & X_{6} & X_{3}
                                                                                      \end{array}\right).$$
   In this representation, the Veronese surface $\mathcal{V}^{4}_{2}$ correspond to the matrices $\left(\begin{array}{ccc}
                                                                                        x_{1}^{2} &x_{1}x_{2} & x_{1}x_{3} \\
                                                                                        x_{1}x_{2} & x_{2}^{2} & x_{2}x_{3} \\
                                                                                        x_{1}x_{3} & x_{2}x_{3} & x_{3}^{2}
                                                                                      \end{array}\right),$ while $\mathcal{M}^{3}_{4}$ is a cubic hypersurface with equation
   \begin{equation}
    \left|\begin{array}{ccc}
     X_{1} & X_{4} & X_{5} \\
     X_{4} & X_{2} & X_{6} \\
     X_{5} & X_{6} & X_{3}
    \end{array}\right|=0.
   \end{equation}
   With the above notation, the orbit $\mathcal{O}_{1}=\mathcal{V}^{4}_{2}$ coincides with the $3\times3$ symmetric matrices over $GF(q)$ of rank 1, while $\mathcal{O}_{2}$ and $\mathcal{O}_{3}$ with the $3\times3$ symmetric matrices over $GF(q)$ of rank 2, and $\mathcal{O}_{4}$ with the $3\times3$ symmetric matrices over $GF(q)$ of rank 3. For more details on the Veronese surface and the secant variety, see \cite{1} and \cite{2}. Moreover, the automorphism group and the orbits of its action on conics in $PG(2,q)$ may be further explored in \cite[Table 7.2]{Hirschfeld1}.

   \subsection{The case $q=2$}
    When $q=2$, the set $PG(5,2)\setminus\mathcal{M}^{3}_{4}$ has cardinality 28, as $|V(NO^{+}(6,2))|$, because $|Q^{+}(5,2)|=|\mathcal{M}^{3}_{4}|=35$. We now construct the graph $N\mathcal{M}^{3}_{4}$ with the same procedure of the tangent graph:
    \begin{itemize}
     \item $V(\mathcal{M}^{3}_{4})=PG(5,2)\setminus\mathcal{M}^{3}_{4}$;
     \item $E(\mathcal{M}^{3}_{4})=\{(x,y)|x,y\in V(\mathcal{M}^{3}_{4}),|\langle x,y\rangle\cap \mathcal{M}^{3}_{4}|=1\}.$
    \end{itemize}
    The lifted automorphism group is represented by the matrices
    $$B=\left(
      \begin{array}{cccccc}
        a_{11}^{2} & a_{12}^{2} & a_{13}^{2} & a_{11}a_{12} & a_{11}a_{13} & a_{12}a_{13} \\
        a_{21}^{2} & a_{22}^{2} & a_{23}^{2} & a_{21}a_{22} & a_{21}a_{23} & a_{22}a_{23} \\
        a_{31}^{2} & a_{32}^{2} & a_{33}^{2} & a_{31}a_{32} & a_{31}a_{33} & a_{32}a_{33} \\
        0 & 0 & 0 & a_{11}a_{22}+a_{21}a_{12} & a_{11}a_{23}+a_{21}a_{13} & a_{12}a_{23}+a_{22}a_{13} \\
        0 & 0 & 0 & a_{11}a_{32}+a_{31}a_{12} & a_{11}a_{33}+a_{31}a_{13} & a_{12}a_{33}+a_{32}a_{13} \\
        0 & 0 & 0 & a_{21}a_{32}+a_{31}a_{22} & a_{21}a_{33}+a_{31}a_{23} & a_{22}a_{33}+a_{32}a_{23} \\
      \end{array}
    \right),$$
    and the 4 orbits have the following properties:
    \begin{itemize}
     \item $|\mathcal{O}_{1}|=|\mathcal{O}_{2}|=7$, $|\mathcal{O}_{3}|=21$, $|\mathcal{O}_{4}|=28$;
     \item $\mathcal{O}_{2}$ is the nuclei plane $N:X_{1}=X_{2}=X_{3}=0$;
     \item $\mathcal{O}_{2}\cup\mathcal{O}_{4}$ is the Klein Quadric \\ $\mathcal{K}:X_{1}^{2}+X_{2}^{2}+X_{3}^{2}+X_{1}X_{2}+X_{2}X_{3}+X_{1}X_{3}+X_{1}X_{6}+X_{2}X_{5}+X_{3}X_{4}=0$.
    \end{itemize}
   
   It is possible to describe the graph $N\mathcal{M}^{3}_{4}$ in other ways and this is investigated deeply in the next subsection.  

  \subsection{Alternative descriptions of $N\mathcal{M}^{3}_{4}$}\label{diffrep}
    We give alternative descriptions of the graph $N\mathcal{M}^{3}_{4}$, each one highlighting a different property of it.\\
  
   Since $\mathcal{M}^{3}_{4}\cap\mathcal{K}=\mathcal{O}_{2}=N$, the secant variety always shares a plane with a Klein Quadric. Starting from the quadric $Q^{+}(5,2)$ in its canonical equation $X_{1}X_{6}+X_{2}X_{5}+X_{3}X_{4}=0$, we consider the Veronese hypersurface having exactly the plane $N:X_{1}=X_{2}=X_{3}=0$ in common with $Q^{+}(5,2)$. It is possible now to give another construction for the graph $N\mathcal{M}^{3}_{4}$, focusing on $Q^{+}(5,2):X_{1}X_{6}+X_{2}X_{5}+X_{3}X_{4}=0$ and $N:X_{1}=X_{2}=X_{3}=0$:
    \begin{itemize}
     \item $V(\mathcal{M}^{3}_{4})=Q^{+}(5,2)\setminus N$;
     \item $E(\mathcal{M}^{3}_{4})=\{(x,y)|x,y\in V(\mathcal{M}^{3}_{4}),|\langle x,y\rangle\cap N|=1\}\cup \{(x,y)|x,y\in V(\mathcal{M}^{3}_{4}),|\langle x,y\rangle\cap Q^{+}(5,2)|=2\}.$
    \end{itemize}

It is also possible to describe the graph in the representation as $3\times3$ matrices over $\mathbb{F}_{q}$:   
   \begin{itemize}
     \item $V(\mathcal{M}^{3}_{4})$ is the set of the non-singular symmetric matrices of order 3 over $\mathbb{F}_{q}$;
     \item $E(\mathcal{M}^{3}_{4})=\{(A,B)|A,B\in V(\mathcal{M}^{3}_{4}),A+B \mbox{ is singular }\}.$
    \end{itemize}
  
Moreover, observing that $(\mathcal{M}^3_4)^C \cup N = Q^+(5,2)$, where  $(\mathcal{M}^3_4)^C$ is the complement of $\mathcal{M}^3_4$, the vertices of the graph can also be described as the points of $Q^{+}(5,2)\setminus N$, and two vertices $u$ and $v$ are adjacent if $u\not\in v^{\perp}$ or $u\in\langle v,v^{\perp}\cap N\rangle$, where $\perp$ is the polarity induced by $Q^{+}(5,2)$.\\
  
In what follows we will use the latter description of $N\mathcal{M}^{3}_{4}$, as it is the most convenient one for the analysis of the strong regularity of the graph (cf. Section \ref{srgm}).

\section{Strong regularity of $N\mathcal{M}^{3}_{4}$}\label{srgm}

In this section we will focus on the strong regularity of $N\mathcal{M}^{3}_{4}$, using the latter representation described in Subsection \ref{diffrep}.
\begin{thm}
 The graph $N\mathcal{M}^{3}_{4}$ is strongly regular with parameters $(v,k,\lambda,\mu)=(28,15,6,10)$.	
\end{thm}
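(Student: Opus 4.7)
The plan is to adopt the last description of $N\mathcal{M}^{3}_{4}$ from Subsection~\ref{diffrep}: $V = Q^{+}(5,2)\setminus N$, with $u\sim v$ iff $u\notin v^{\perp}$ or $u\in\pi_v$, where $\pi_v:=\langle v,v^{\perp}\cap N\rangle$ and $\perp$ denotes the polarity induced by $Q^{+}(5,2)$. I would first establish the structural facts: in the given coordinates $N=N^{\perp}$ and $N\subseteq Q^{+}(5,2)$, so $N$ is a generator of the quadric; for every $v\in V$ the line $\ell_v:=v^{\perp}\cap N$ lies in $N$, and $\pi_v$ is a totally singular plane (another generator) meeting $N$ exactly in $\ell_v$; through each line of $N$ there pass exactly two generators of $Q^{+}(5,2)$, namely $N$ and a unique partner. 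Consequently $V$ partitions into $7$ classes of $4$ points each, indexed by the lines of $N$, and same-class pairs are exactly the edges with $u\in\pi_v$. For $u,v$ in distinct classes, $\pi_u$ and $\pi_v$ lie in the generator family opposite to $N$ and therefore meet only in a point of $N$, hence $u\notin\pi_v$ and the between-class edges are precisely those with $u\not\perp v$.

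For the degree, $v^{\perp}\cap Q^{+}(5,2)$ is the tangent cone at $v$ with $19$ points, and $v^{\perp}\cap N=\ell_v$ has $3$ points, so $|V\cap v^{\perp}|=16$. Hence $v$ has $12$ non-perpendicular neighbors and $3$ class neighbors, giving $k=15$.

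For $\lambda$ and $\mu$ I would fix distinct $u,v\in V$ and split the common neighbors into those lying in the classes of $u$ or $v$ and those outside. Any $w$ outside both classes must satisfy $w\notin u^{\perp}\cup v^{\perp}$ in order to be a common neighbor (otherwise being in $u^{\perp}$ or $v^{\perp}$ and not in the corresponding $\pi$ would block adjacency), so inclusion-exclusion gives the outside count as $28-32+|V\cap u^{\perp}\cap v^{\perp}|$. The latter is computed from the polar solid $\langle u,v\rangle^{\perp}$: when $\langle u,v\rangle$ is totally singular the polar carries an $11$-point cone on $Q^{+}(5,2)$, meeting $N$ in the whole line $\ell_u=\ell_v$ for same-class pairs and in the single point $\ell_u\cap\ell_v$ for different-class pairs; when $\langle u,v\rangle$ is a secant the polar induces a $Q^{+}(3,2)$ with $9$ points meeting $N$ in $1$ point. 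The contribution from the class of $u$ is controlled by the restriction of the bilinear form $B(\cdot,v)$ to $\ell_u$, which is a nonzero linear functional whenever $\ell_u\neq\ell_v$; this forces exactly one of the three non-$u$ class members to be adjacent to $v$ when $u\not\perp v$, and exactly two when $u\perp v$. Assembling the three cases yields $\lambda=6$ for both edge types and $\mu=10$ for non-edges.

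The main obstacle is the clean organisation of this three-way case split, in particular the determination of $|V\cap u^{\perp}\cap v^{\perp}|$, which depends on whether $\langle u,v\rangle$ is totally singular and whether $\ell_u=\ell_v$, together with the class-member count. Each ingredient is a short polar-space computation, but their combination has to be carried out carefully to verify that the common-neighbour counts are indeed constants of the adjacency relation.
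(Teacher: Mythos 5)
Your plan is correct and every count in it checks out; at bottom it follows the same strategy as the paper: the same final model of the graph on $Q^{+}(5,2)\setminus N$, the degree from the $19$-point tangent cone, and $\lambda,\mu$ obtained from the number of quadric points in $u^{\perp}\cap v^{\perp}$ ($11$ when $\langle u,v\rangle$ is totally singular, $9$ when it is secant) corrected by the intersection with $N$. Where you genuinely diverge is in the organisation. You first prove that each $\pi_v$ is a generator of $Q^{+}(5,2)$ meeting $N$ exactly in $\ell_v$, so the $28$ vertices split into $7$ classes of $4$ indexed by the lines of $N$, the $\pi$-adjacency is precisely the same-class relation, and cross-class adjacency reduces to non-perpendicularity. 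The paper only exhibits this partition afterwards (its lemma that $\widehat{N\mathcal{M}^{3}_{4}}$ is $7$ copies of $K_4$); inside its proof of strong regularity it instead treats the mixed cases by ad hoc intersections of the solids $\langle u,N\rangle$, $\langle v,N\rangle$ and of $\pi_u$ with $v^{\perp}$. Your version buys uniformity: the ``outside'' common neighbours come from a single inclusion-exclusion formula $28-32+|V\cap u^{\perp}\cap v^{\perp}|$, and the in-class contributions from one linear-algebra observation, which makes it transparent that the counts are constant. One small imprecision to fix when writing it up: the object controlling the class contribution is $B(\cdot,v)$ restricted to $\pi_u$, whose kernel is a line $m$ of $\pi_u$ with $m\neq\ell_u$, giving $|m\setminus\ell_u|=2$ class members inside $v^{\perp}$; the nonvanishing of $B(\cdot,v)$ on $\ell_u$ when $\ell_u\neq\ell_v$ is the hypothesis guaranteeing $m\neq\ell_u$ and $\pi_u\not\subseteq v^{\perp}$, not itself the functional you evaluate on the class. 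With that reading, your case values ($2+2+6=10$ for $\mu$; $2+0+4$ and $1+1+4$ for the two kinds of edges, both giving $\lambda=6$) agree with the paper's.
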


\begin{proof}
Each of the 28 vertices lies on 9 isotropic lines. Fix $P\in Q^{+}(5,2)\setminus N$ The polar hyperplane $P^{\perp}$ meets the quadric in a cone having vertex in $P$ and a hyperbolic quadric $Q^{+}(3,2)$ as base. Among the 9 isotropic lines through $P$, 3 meet the nuclei plane in an isotropic line of $Q^{+}(3,2)$. Hence, we have 3 neighbours so represented. The other 6 lines define 12 non-neighbours of $P$, and all the other 12 points generate a secant line with $P$, so that the graph is $15$-regular. Given two vertices $u$ and $v$, to get the parameters $\lambda$ and $\mu$, we may consider separately the two type of adjacencies, since $uv$ is an edge if $u\not\in v^{\perp}$ or $u\in\langle v,v^{\perp}\cup N\rangle$. As $u^{\perp}$ meets $Q^{+}(5,2)$ in a cone with vertex $P$ and base $Q^{+}(3,2)$, we have that $u^{\perp} \cap N$ is a line, and $\langle u,u^{\perp}\cup N\rangle$ is a plane. In what follows, $\pi_{u}$ stands for the projective plane $\pi_{u} =\langle u,u^{\perp}\cup N\rangle$.\\

We start by calculating $\mu$, the number of common neighbours of non-adjacent vertices. Let $w \in Q^{+}(5,2)\setminus N$ be a common neighbour of the non-adjacent vertices $u$ and $v$, with $u\in v^{\perp}\setminus\pi_{v}$ and $v\in u^{\perp}\setminus\pi_{u}$.
Hence, we must distinguish different cases:\\
\fbox{Case $1$} \textbf{$w\in\pi_{u}$ and $w\in\pi_{v}$}\\
 Since $\langle u,N\rangle$ and $\langle v,N\rangle$ are two solids contained in the 4-space $\langle u,v,N\rangle$, their intersection is the plane $N$. Therefore, $\pi_{u}\cap\pi_{v}\subseteq N$, and we have no such common neighbours.\\
\fbox{Case $2$} \textbf{$w\in\pi_{u}$ and $w\notin v^{\perp}$} or \textbf{$w\not\in u^{\perp}$ and $w\in\pi_{v}$}\\
We will investigate only the first subcase, the second one is analogous by switching the role of $u$ and $v$. Suppose that $w\in\pi_{u}$ and $w\notin v^{\perp}$, then we have exactly two such common neighbours. Hence $|\pi_{u}|=7$ and $|\pi_{u}\setminus N|=4$. Moreover the projective plane $\pi_{v}$ meets $\pi_{u}$ in a line, passing through $u$ and meeting $N$ in a point, so $|(\pi_{u}\setminus (N\cup v^{\perp}))|=2$.\\
\fbox{Case $3$} \textbf{$w\not\in u^{\perp}$ and $w\notin v^{\perp}$}\\
 Since $u^{\perp}\cap v^{\perp}=\langle u,v\rangle^{\perp}$, it is a solid that meets the quadric in a cone having as vertex the line $\langle u,v\rangle$ and base a $Q^{+}(1,2)$, and so $|u^{\perp}\cap v^{\perp}|$ has 11 points on the quadric $Q^{+}(5,2)$. Moreover, $u^{\perp}$ cuts a cone with vertex $u$ and base a $Q^{+}(3,2)$, and $u^{\perp}$ has 19 points on $Q^{+}(5,2)$. Hence we get 27 isotropic points on $u^{\perp}\cup v^{\perp}$, while the space $(u^{\perp}\cup v^{\perp})\cap N$ consists on two concurrent lines. Then, the number of common neighbours in Case 3 is equal to $28-|(u^{\perp}\cup v^{\perp})\setminus N|=28-(27-5)=6$.
\\
In conclusion $\mu=0+2+2+6=10$.\\

To calculate the value of $\lambda$ we remark that two adjacent vertices may lie in both a secant line, or in an isotropic line having the third point in $N$.
Let $w \in Q^{+}(5,2)\setminus N$ be a common neighbour of the adjacent vertices $u$ and $v$, and let $u\in\pi_{v}$ and $v\in\pi_{u}$.

Different cases arises:\\
\fbox{Case $1$} \textbf{$w\in\pi_{u}$ and $w\in\pi_{v}$}\\
 In this case $\pi_{u}=\pi_{v}$ and we have exactly two more common neighbours in $\pi_{u}\setminus N$\\
\fbox{Case $2$} \textbf{$w\in\pi_{u}$ and $w\notin v^{\perp}$} or \textbf{$w\not\in u^{\perp}$ and $w\in\pi_{v}$}\\
 We will investigate only the first subcase, the second one is analogous by switching the role of $u$ and $v$. Suppose that $w\in\pi_{u}$ and $w\notin v^{\perp}$. In this case $\pi_{u}\subseteq v^{\perp}$, and there are no common neighbours.\\
\fbox{Case $3$} \textbf{$w\not\in u^{\perp}$ and $w\notin v^{\perp}$}\\
 As before we have 27 isotropic points on $u^{\perp}\cup v^{\perp}$, but the space $(u^{\perp}\cup v^{\perp})\cap N$ consists on the line $\pi_{u}\cap N$. Then, the number of common neighbours in Case 3 is equal to $28-|(u^{\perp}\cup v^{\perp})\setminus N|=28-(27-3)=4$.
\\
The number of common neighbours is $2+0+0+4=6$.\\

Let now $w \in Q^{+}(5,2)\setminus N$ be a common neighbour of the adjacent vertices $u$ and $v$, and let $u\not\in v^{\perp}$ and $v\not\in u^{\perp}$.

Again, we must distinguish different cases:\\
\fbox{Case $1$} \textbf{$w\in\pi_{u}$ and $w\in\pi_{v}$}\\
 Since $\langle u,N\rangle$ and $\langle v,N\rangle$ are two solids contained in the 4-space $\langle u,v,N\rangle$, their intersection is the plane $N$. Therefore, $\pi_{u}\cap\pi_{v}\subseteq N$, and we have no such common neighbours.\\
\fbox{Case $2$} \textbf{$w\in\pi_{u}$ and $w\notin v^{\perp}$} or \textbf{$w\not\in u^{\perp}$ and $w\in\pi_{v}$}\\
 We will investigate only the first subcase, the second one is analogous by switching the role of $u$ and $v$. Suppose that $w\in\pi_{u}$ and $w\notin v^{\perp}$, then we have exactly one such common neighbour. Firstly $|\pi_{u}|=7$, and $v^{\perp}$ meets the plane $\pi_{u}$ in a line $\ell$. Then the common neighbours are exactly $|\pi_{u}\setminus(N\cup\ell\cup\{u\})|=1$.\\
\fbox{Case $3$} \textbf{$w\not\in u^{\perp}$ and $w\notin v^{\perp}$}\\
 $u^{\perp}$ and $v^{\perp}$ are two 4-spaces meeting the quadric in two cones with the point as vertex and a $Q^{+}(3,2) $ as base. Since $u^{\perp}\cap v^{\perp}$ is a solid, $u^{\perp}\cap v^{\perp}$ meet the quadric in 9 points. Hence we get 29 isotropic points on $u^{\perp}\cup v^{\perp}$, while the space $(u^{\perp}\cup v^{\perp})\cap N$ consists on two concurrent lines. Then,  the number of common neighbours in Case 3 is equal to $28-|(u^{\perp}\cup v^{\perp})\setminus N|=28-(29-5)=4$.
\\
Even in this case the number of common neighbours is $0+1+1+4=6$, thus we can conclude that $\lambda=6$.
\end{proof}

\section{The isomorphism issue}
By a classical result in graph theory, two graphs are isomorphic if and only if their adjacency matrices are similar, see \cite{Biggs}. There are 4 known non-isomorphic strongly regular graphs with parameters $(28,15,6,10)$ in the Spence's database (see \cite{Spence}). Looking at the adjacency matrices of $NO^{+}(6,2)$ and $N\mathcal{M}^{3}_{4}$ it is straightforward to check the similarity between the two matrices.

One isomorphism between these two graphs is given by the following:
$$(1,0,1,0,1,1)\longmapsto(1,1,0,0,1,1)$$
$$(1,1,1,0,0,1)\longmapsto(0,0,1,0,0,0)$$
$$(0,1,1,0,1,1)\longmapsto(1,1,1,1,0,1)$$
$$(0,1,0,1,1,0)\longmapsto(1,1,0,1,0,0)$$
$$(1,0,0,1,0,1)\longmapsto(0,1,0,0,0,1)$$
$$(0,0,1,1,0,0)\longmapsto(0,1,1,0,0,1)$$
$$(1,1,0,1,1,0)\longmapsto(1,1,1,0,1,1)$$
$$(1,0,1,1,0,0)\longmapsto(0,1,0,1,0,0)$$
$$(1,0,0,1,1,1)\longmapsto(1,0,1,1,0,1)$$
$$(0,0,1,1,1,0)\longmapsto(1,0,0,1,1,0)$$
$$(1,1,0,1,0,1)\longmapsto(0,1,0,0,0,0)$$
$$(0,1,1,1,0,0)\longmapsto(0,1,1,0,0,0)$$
$$(0,1,0,1,1,1)\longmapsto(1,0,0,0,1,0)$$
$$(1,0,1,1,1,0)\longmapsto(1,0,1,0,0,0)$$
$$(1,1,1,1,0,0)\longmapsto(0,1,0,1,0,1)$$
$$(0,0,1,1,0,1)\longmapsto(0,0,1,0,1,0)$$
$$(0,0,1,1,1,1)\longmapsto(1,1,0,0,0,0)$$
$$(0,1,1,1,0,1)\longmapsto(0,0,1,0,1,1)$$
$$(0,1,0,0,1,0)\longmapsto(1,1,1,0,0,0)$$
$$(1,0,0,0,0,1)\longmapsto(0,1,1,1,1,1)$$
$$(1,1,0,0,1,0)\longmapsto(1,1,0,1,1,1)$$
$$(1,1,1,1,1,1)\longmapsto(1,1,1,1,1,0)$$
$$(1,0,0,0,1,1)\longmapsto(1,0,0,0,0,0)$$
$$(1,1,0,0,0,1)\longmapsto(0,1,1,1,1,0)$$
$$(0,1,0,0,1,1)\longmapsto(1,0,1,1,1,1)$$
$$(0,1,1,0,1,0)\longmapsto(1,0,1,0,1,0)$$
$$(1,0,1,0,0,1)\longmapsto(0,0,1,0,0,1)$$
$$(1,1,1,0,1,0)\longmapsto(1,0,0,1,0,0)$$

Since adjacencies in both $NO^{+}(6,2)$ and $N\mathcal{M}^{3}_{4}$ are defined also in the nuclei plane $N$, we now define the graphs $\widehat{NO^{+}(6,2)}$ and $\widehat{N\mathcal{M}^{3}_{4}}$, with the same vertex sets of $NO^{+}(6,2)$ and $N\mathcal{M}^{3}_{4}$ and such that in both cases two vertices $u$ and $v$ are adjacent if and only if the third point on the line $uv$ is in $N$.
\begin{lem}
 $\widehat{N\mathcal{M}^{3}_{4}}\cong\widehat{NO^{+}(6,2)}$ and they both consists of 7 copies of $K_{4}$.
\end{lem}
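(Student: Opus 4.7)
The plan rests on a simple structural observation. Since every projective line in $PG(5,2)$ has exactly three points, the third point on the line through two distinct vertices $u,v$ is just $u+v$, and in both graphs the adjacency $u\sim v$ is therefore equivalent to $u+v\in N$. With $N$ fixed in its canonical form $X_{1}=X_{2}=X_{3}=0$, this condition reads $(u_{1},u_{2},u_{3})=(v_{1},v_{2},v_{3})$.

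My first step would be to introduce, on each vertex set, the equivalence relation given by agreement of the first three coordinates. Since the class of $(0,0,0)$ is precisely $N$, and $N$ is disjoint from the vertex set of $\widehat{NO^{+}(6,2)}$ and removed from that of $\widehat{N\mathcal{M}^{3}_{4}}$, we are left in each case with exactly seven equivalence classes indexed by the non-zero triples $(a,b,c)\in\mathbb{F}_{2}^{3}$.

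Next I would count the size of each class in the two graphs. For $\widehat{NO^{+}(6,2)}$ the class of $(a,b,c)$ consists of the external points $(a,b,c,u_{4},u_{5},u_{6})$ with $au_{6}+bu_{5}+cu_{4}=1$, an affine hyperplane in $\mathbb{F}_{2}^{3}$ with four solutions; for $\widehat{N\mathcal{M}^{3}_{4}}$, taken in the description as $Q^{+}(5,2)\setminus N$, the class consists of the isotropic points $(a,b,c,u_{4},u_{5},u_{6})$ with $au_{6}+bu_{5}+cu_{4}=0$, a linear hyperplane with four solutions. So each of the seven classes contributes four vertices, accounting for all $28$ vertices in either graph.

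Finally, within a single class any two distinct vertices $u,v$ satisfy $u+v\in N$ and are thus adjacent, while for vertices in distinct classes $u+v\notin N$ and no adjacency occurs. Hence both graphs decompose as seven vertex-disjoint copies of $K_{4}$, and any bijection between the two vertex sets that matches classes to classes is a graph isomorphism. There is no substantial obstacle in the argument; the only point requiring care is that passing from external to isotropic vertices changes the defining equation of a class from affine to linear, but the number of solutions in $\mathbb{F}_{2}^{3}$ is four in both cases, which is what makes the two graphs coincide.
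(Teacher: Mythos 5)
Your proof is correct, but it takes a more elementary, coordinate-based route than the paper. You observe that over $\mathbb{F}_2$ the third point of the line $uv$ is $u+v$, so that adjacency in either hat-graph means exactly that $u$ and $v$ agree in the first three coordinates, and you then partition each vertex set into the seven fibres over the nonzero triples $(a,b,c)$, checking that each fibre has four points because the relevant equation $au_6+bu_5+cu_4=\varepsilon$ (with $\varepsilon=1$ for the non-isotropic points of $\widehat{NO^{+}(6,2)}$ and $\varepsilon=0$ for the points of $Q^{+}(5,2)\setminus N$) always has four solutions in $\mathbb{F}_2^3$. The paper instead argues geometrically and locally: it fixes a vertex $P$, uses the polarity to show that $P^{\perp}\cap N$ is a line $\ell$ (so the graph is $3$-regular), and then shows that the plane $\langle P,\ell\rangle$ is independent of the choice of neighbour, whence each connected component is a $K_4$ corresponding to a line of $N$. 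Your global partition argument buys you two things the paper's local argument does not make explicit: it accounts for all $28$ vertices at once ($7\times 4$), and it makes the final isomorphism trivial, since within a class all pairs are adjacent and across classes none are, so any class-preserving bijection works. The paper's argument, on the other hand, identifies the components intrinsically with the lines of $N$ via the polarity, which is the duality of your labelling by points $(a,b,c)$ of a $PG(2,2)$; both are complete proofs.
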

\begin{proof}
 Firstly, we prove that $\widehat{N\mathcal{M}^{3}_{4}}$ and $\widehat{NO^{+}(6,2)}$ are $3$-regular.

  Let $P \in V(\widehat{N\mathcal{M}^{3}_{4}})$, then $P^{\perp}$ meets $Q^{+}(5,2)$ in a cone with vertex $P$ and base $Q^{+}(3,2)$, hence $P^{\perp} \cap N$ is a line, say $\ell$. Now suppose that $P \in V(\widehat{NO^{+}(6,2)})$, we have that $P^{\perp}$ meets $Q^{+}(5,2)$ in a parabolic quadric $Q(4,2)$, and also in this case $P^{\perp} \cap N$ is a line, say $\ell^{\prime}$. Thus the neighbours of $P$ are the three point on the line from $P$ to $\ell$ in the former case, and to $\ell^{\prime}$ in the latter.

Now, the space joining $P$ and $\ell$ is a projective plane $ \pi= \langle P,\ell \rangle$, made of 7 points: $P$, the three points of $\ell$, and the three neighbours of $P$. Let $R$ be one of the neighbours of $P$. The plane $\langle R,\ell \rangle$ meets $\pi$ in $5$ points: $P$, $R$ and the three points on $\ell$. Thus $\langle R,\ell \rangle$ must be $\pi$, and $P$ and its neighbours are a connected component isomorphic to $K_4$ in $\widehat{N\mathcal{M}^{3}_{4}}$. The same argument holds also for $\langle P, \ell^{\prime} \rangle$ and $\widehat{NO^{+}(6,2)}$ is made of connected components isomorphic to $K_4$.
\end{proof}
We can see that each connected component $K_{4}$ correspond uniquely to a line of $N$, and an isomorphism between the graphs $\widehat{NO^{+}(6,2)}$ and $\widehat{N\mathcal{M}^{3}_{4}}$ sends one connected $K_{4}$ of $\widehat{NO^{+}(6,2)}$ in one connected component of $\widehat{N\mathcal{M}^{3}_{4}}$.

\section{Conclusion}
 It is shown that the two strongly regular graphs $NO^{+}(6,2)$ and $N\mathcal{M}^{3}_{4}$, arising from the Klein Quadric and the Veronese hypersurface, are isomorphic. By the way, even for $q>2$, $|\mathcal{M}^{3}_{4}|=|Q^{+}(5,q)|=(q^{2}+1)(q^{2}+q+1)$. It is natural to ask what is the connection between this two objects, in a pure geometrical point of view. Moreover, further researches should show whether there is a connection between the \textit{Klein representation} of the $(q^{2}+1)(q^{2}+q+1)$ lines of $PG(3,q)$ and the \textit{Veronese embedding} of the $(q^{2}+1)(q^{2}+q+1)$ (degenerate) conics of $PG(2,q)$.

\section*{Acknowledgements}
 The authors are grateful to the anonymous reviewers for their hints to improve the quality and the readability of the paper. The authors also wish to thank A. Cossidente for fruitful discussion on the Veronese surface and the related combinatorial structures.

\end{document}